\documentclass[a4paper,12pt]{article}
\usepackage{amsmath}
\usepackage{amsthm}
\usepackage{amsfonts}
\usepackage{mathtools}
\usepackage{amssymb}
\usepackage{ulem}
\usepackage{color}
\usepackage{graphicx} 
\usepackage{tikz}
\usepackage{float}
\usetikzlibrary{intersections, calc, arrows.meta}
\usepackage{mleftright}
\usepackage[margin=25mm]{geometry}

\theoremstyle{definition}
\newtheorem{prop}{Proposition}
\newtheorem{defin}[prop]{Definition}

\newtheorem*{rem}{Remark} 

\newtheorem{thm}[prop]{Theorem}
\newtheorem{lem}[prop]{Lemma}

\everymath{\displaystyle}

\title{Lifespan of Classical Solutions to\\ One-Dimensional Quasilinear Wave Equations}
\date{}
\author{Yuusuke Sugiyama and Taro Yamanoi}

\begin{document}

\maketitle

\begin{abstract}
	In this paper, we consider the upper and lower bounds of the lifespan of classical solutions
	of the Cauchy problem for the one-dimensional quasilinear wave equation $u_{tt}-c(u_x)^2u_{xx}=0$ where the derivative of $c(\theta)$ tends to $0$ near the origin.
	In particular, our result shows that the lifespan of the solution extends algebraically depending on the smallness of the initial data.
	Furthermore, we also show that when $c(\theta)$ is flat at the origin ($c'(\theta)$ and any higher order derivatives vanishes at the origin), the lifespan extends exponentially depending on the smallness of the initial data.
	Our proof is based on the method of Lax’s characteristics and Riemann invariants.

\end{abstract}

\section{Introduction}

\subsection{Problem}

In this paper, we consider the following Cauchy problem of the one-dimensional quasilinear wave equation.
\begin{equation} \label{Eq:Main}
	\begin{cases}
		u_{tt} - c(u_x)^2u_{xx} = 0 \hspace{8pt}  (t,x) \in [0,T]\times \mathbb{R}, \\
		u(0,x)=\varepsilon \varphi(x),\,u_t(0,x)=\varepsilon \psi(x) \hspace{8pt}  x \in \mathbb{R}.
	\end{cases}
\end{equation}
Let \(p>2\), and let \(U\subset \mathbb{R}\) be a neighborhood of the origin.
Throughout this paper, we assume that
\begin{equation*}
	c(0)=1 \hspace{10pt} \text{and} \hspace{10pt} c(\theta)>0 \hspace{10pt} \text{for all }\theta \in U.
\end{equation*}
In addition, we assume that either
\begin{equation}\label{Eq:c assump1}
	\begin{aligned}
		 & c\in C^{\lceil p-2\rceil,p-\lceil p-1 \rceil}(U), \\
		 & c'(0)=\cdots=c^{(\lceil p-2\rceil)}(0)=0,
	\end{aligned}
\end{equation}
for some constant \(C>0\), or
\begin{equation}\label{Eq:c assump p=2}
	c\in C^1(U),\qquad c'(0)\ne 0.
\end{equation}
The following inequality derived from the assumption \eqref{Eq:c assump1} will be used frequently in the proof:
\begin{equation*}
	\bigl|c^{(\lceil p-2\rceil)}(\theta)\bigr|
	\le C|\theta|^{p-\lceil p-1\rceil}
	\quad \text{for all } \theta \in U
\end{equation*}
The ceiling function, denoted by $\lceil x \rceil$, is defined as the least integer greater than or equal to a given real number $x$.
We note that the condition \eqref{Eq:c assump p=2} formally corresponds to the case \(p=2\).

\begin{rem}
	Note that the existence and uniqueness of a local-in-time solution to the Cauchy problem (1) is known for $\varphi \in C_b^2(\mathbb{R})$ and $\psi \in C_b^1(\mathbb{R})$\,(Courant-Lax \cite{Courant}, Hartman-Wintner \cite{Philip}, Douglis \cite{Douglis}).
\end{rem}
By the known local-in-time existence and uniqueness theorem for classical solutions, we define the lifespan $T^*$ by
\begin{align*}
	T^* = \sup \biggl\{ T>0 \;\biggm|\; \sup_{t\in [0,T)} \Bigl(
	 & \|u(t)\|_{L^\infty} +\|u_t(t)\|_{L^\infty}+\|u_x(t)\|_{L^\infty} + \|u_{tt}(t)\|_{L^\infty}                                                                          \\
	 & + \|u_{tx}(t)\|_{L^\infty}+\|u_{xx}(t)\|_{L^\infty} \Bigr) < \infty \hspace{10pt} \text{and} \hspace{10pt} \overline{u_x([0,T)\times \mathbb{R})} \subset U\biggr\}.
\end{align*}
Here we state the main results of this paper, which provide upper and lower estimates for the lifespan of the solution.
\begin{thm}[Existence]
	Let $\varphi \in C_b^2(\mathbb{R})$ and $\psi \in C_b^1(\mathbb{R})$.
	Then  there exists a constant $\varepsilon_0>0$ such that
	\begin{align*}
		C\varepsilon^{-(p-1)}\leq & \,T^*  \hspace{8pt}\text{if}\hspace{9pt}0<\varepsilon \leq \varepsilon_0 \hspace{8pt} \text{for the assumption \eqref{Eq:c assump1}},    \\
		C\varepsilon^{-1}\leq     & \,T^*  \hspace{8pt}\text{if}\hspace{9pt}0<\varepsilon \leq \varepsilon_0 \hspace{8pt} \text{for the assumption \eqref{Eq:c assump p=2}},
	\end{align*}
	where $C>0$ is a positive constant independent of $\varepsilon$.
\end{thm}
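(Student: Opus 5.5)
We will argue by Lax's method of characteristics and Riemann invariants. Set $v=u_x$, $w=u_t$; then \eqref{Eq:Main} becomes the $2\times 2$ system $v_t-w_x=0$, $w_t-c(v)^2v_x=0$. Introduce $G(v)=\int_0^v c(\theta)\,d\theta$ and the Riemann invariants
\begin{equation*}
	r=w+G(v),\qquad s=w-G(v),
\end{equation*}
which satisfy
\begin{equation*}
	r_t-c(v)r_x=0,\qquad s_t+c(v)s_x=0.
\end{equation*}
Since $G'(0)=c(0)=1$, $v$ is a smooth function of $r-s$ near $0$. Thus $r$ is constant along the characteristics $dx/dt=-c(v)$, and $s$ is constant along $dx/dt=c(v)$. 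It follows that $\|r(t)\|_{L^\infty}+\|s(t)\|_{L^\infty}\le C\varepsilon$ as long as the solution exists. In particular $|v|\le C\varepsilon$, so $\overline{u_x}\subset U$ for small $\varepsilon$, and $u,u_t$ remain bounded on bounded time intervals. It therefore suffices to control the first derivatives of $r$ and $s$, i.e.\ $u_{xx},u_{tx},u_{tt}$.

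Following Lax, differentiate the equation for $r$ in $x$. With $\alpha=r_x$ and $\beta=s_x$ this gives
\begin{equation*}
	\alpha_t-c\alpha_x=c_x\alpha=c'(v)v_x\alpha,\qquad v_x=\frac{\alpha-\beta}{2c(v)}.
\end{equation*}
The standard device is to remove the coupling term in $\beta$. Note that $\beta$ is the derivative of $s$, and $s$ is transported along the opposite family. Along the $r$-characteristic (direction $\partial_t-c\partial_x$) we have $s_t-cs_x=-2cs_x$. Hence $c'(v)\beta\alpha/(2c)$ can be written as $-\alpha\,\frac{d}{dt}h(v)$ along the characteristic, for a suitable function $h$ with $h'\sim c'/c$; concretely $h(v)=\tfrac14\log c(v)$ up to the standard computation. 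With the weight $\tilde\alpha=e^{h(v)}\alpha$ we obtain a Riccati equation
\begin{equation*}
	\frac{d}{dt}\tilde\alpha=-k(v)\tilde\alpha^2,\qquad k(v)=\frac{c'(v)}{2c(v)}e^{-h(v)}.
\end{equation*}
The same holds for $\tilde\beta=e^{h(v)}\beta$ along the other family. Since $h(v)=O(\varepsilon)$, the weights are harmless.

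Solving the Riccati equation gives
\begin{equation*}
	\frac{1}{\tilde\alpha(t)}=\frac{1}{\tilde\alpha(0)}+\int_0^t k(v)\,d\tau,
\end{equation*}
so $\tilde\alpha$ stays bounded (by $2|\tilde\alpha(0)|\le C\varepsilon$) as long as
\begin{equation*}
	\int_0^t|k(v)|\,d\tau\le \frac{1}{2C\varepsilon}.
\end{equation*}
Under \eqref{Eq:c assump1}, Taylor expansion with the stated bound on $c^{(\lceil p-2\rceil)}$ gives $|c'(\theta)|\le C|\theta|^{p-2}$. Hence $|k(v)|\le C\varepsilon^{p-2}$, and the integral is at most $Ct\varepsilon^{p-2}$. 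The condition therefore holds for $t\le C\varepsilon^{-(p-1)}$. Under \eqref{Eq:c assump p=2} we only have $|k|\le C$, which gives $t\le C\varepsilon^{-1}$.

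A continuation (bootstrap) argument then closes the proof. On $[0,T]$ with $T$ below these thresholds, $|v|\le C\varepsilon$ keeps us in $U$, and the Riccati bound gives $|r_x|+|s_x|\le C\varepsilon$. Consequently $u_{xx}$, $u_{tx}$ and $u_{tt}=c^2u_{xx}$ are bounded, so the blow-up criterion in the definition of $T^*$ cannot occur before $T$.

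The main obstacle is the derivation of the exact Riccati form. One has to check that the cross term $c'(v)\alpha\beta/(2c)$ is precisely a total derivative along the $r$-characteristic, and then verify that the weight $e^{h}$ is uniformly comparable to $1$. The estimate $|c'(\theta)|\le C|\theta|^{p-2}$ from \eqref{Eq:c assump1} is the key quantitative input: it is what yields the extra factor $\varepsilon^{p-2}$ in the lifespan.
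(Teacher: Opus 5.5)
Your proposal is correct and follows the paper's proof essentially step by step: $L^\infty$ conservation of the Riemann invariants gives $|u_x|\le C\varepsilon$, Taylor expansion gives $|c'(u_x)|\le C\varepsilon^{p-2}$, and the weighted derivative $\sqrt{c}\,r_x$ (the paper's $F_1$) satisfies a Riccati equation; your explicit integration of $1/\tilde\alpha$ replaces the paper's comparison with $f(t)$. There are two harmless slips: since $(\partial_t-c\partial_x)u_x=s_x$, the weight must be $e^{h}=\sqrt{c}$ (i.e.\ $h=\tfrac12\log c$, not $\tfrac14\log c$), and the resulting equation is $\frac{d}{dt}\tilde\alpha=+\frac{c'}{2c^{3/2}}\tilde\alpha^2$ rather than with a minus sign, but only $|k|$ enters your estimate, so the conclusion is unaffected.
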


\begin{thm}[Blowup]
	Let $\varphi \in C_b^2(\mathbb{R})$ and $\psi \in C_b^1(\mathbb{R})$ and further suppose that $\varphi_x(x), \psi(x) \rightarrow 0$ as $|x| \rightarrow \infty$.
	Moreover, we suppose that $c(\theta)$ is a function satisfying \eqref{Eq:c assump1} and there exists a constant $C>0$ such that, for $\theta \in U$,
	\begin{equation}\label{Eq:c assump2}
		C|\theta|^{p-2} \leq c'(\theta),
	\end{equation}
	or
	\begin{equation}\label{Eq:c assump p=2 2}
		c \in C^1(U) , \quad c'(0) > 0.
	\end{equation}
	Furthermore, we assume that there exists a point $x_0 \in \mathbb{R}$ such that one of the following two conditions is satisfied:
	\begin{align*}
		(\mathrm{i})  & \,\,\psi_x(x_0)+\varphi_{xx}(x_0)>0, \\[7pt]
		(\mathrm{ii}) & \,\,\psi_x(x_0)-\varphi_{xx}(x_0)>0.
	\end{align*}
	Then there exists a constant $\varepsilon_0>0$ such that
	\begin{align*}
		T^* \leq   C'\varepsilon^{-(p-1)} & \hspace{8pt}\text{if}\hspace{9pt}0<\varepsilon \leq \varepsilon_0 \hspace{8pt} \text{for the assumptions \eqref{Eq:c assump1} and \eqref{Eq:c assump2}}, \\
		T^*  \leq  C'\varepsilon^{-1}     & \hspace{8pt}\text{if}\hspace{9pt}0<\varepsilon \leq \varepsilon_0 \hspace{8pt} \text{for the assumption \eqref{Eq:c assump p=2 2} }.
	\end{align*}
	where $C'>0$ is a positive constant independent of $\varepsilon$.

	Moreover,
	\begin{equation*}
		\limsup_{t \rightarrow T^*}\{\|u_{tx}(t)\|_{L^\infty}+\|u_{xx}(t)\|_{L^\infty}\}=\infty.
	\end{equation*}
\end{thm}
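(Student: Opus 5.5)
We work with Riemann invariants along Lax's characteristics. Set $v=u_x$, $w=u_t$, so that $v_t=w_x$ and $w_t=c(v)^2v_x$. Put $\Phi(v)=\int_0^v c(\theta)\,d\theta$ and define
\[
R=w+\Phi(v),\qquad S=w-\Phi(v).
\]
Then $R$ satisfies $R_t-c(v)R_x=0$ and $S$ satisfies $S_t+c(v)S_x=0$. Hence $R$ and $S$ are constant along the backward and forward characteristics $dx/dt=\mp c(v)$, respectively. This gives $|v|,|w|\le C\varepsilon$ for as long as the solution exists, so $\overline{u_x}$ stays in $U$.

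Next we differentiate. Let $r=R_x$ and $s=S_x$, and introduce weighted quantities $\tilde r=\sqrt{c(v)}\,r$ and $\tilde s=\sqrt{c(v)}\,s$. A direct computation, in the spirit of Lax and John, gives along the characteristics
\[
\frac{d}{dt}\tilde r=\pm\frac{c'(v)}{2c(v)^{3/2}}\,\tilde r^{2},\qquad
\frac{d}{dt}\tilde s=\pm\frac{c'(v)}{2c(v)^{3/2}}\,\tilde s^{2},
\]
with the cross terms eliminated by the weight. This is a Riccati equation. Condition (i) means $r(0,x_0)=\varepsilon(\psi_x+\varphi_{xx})(x_0)+O(\varepsilon^2)>0$ (up to the factor $c$), and (ii) gives the same for $-s$. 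We choose the component and the sign so that the coefficient $c'(v)\tilde r/(2c^{3/2})$ has the sign that drives blowup; the hypothesis \eqref{Eq:c assump2} $c'(\theta)\ge C|\theta|^{p-2}$, in particular $c'\ge0$, fixes this sign. Integrating the Riccati equation gives
\[
\frac{1}{\tilde r(t)}=\frac{1}{\tilde r(0)}-\int_0^t\frac{c'(v)}{2c^{3/2}}\,d\tau ,
\]
so blowup occurs once $\int_0^t c'(v(\tau,x(\tau)))\,d\tau\gtrsim \varepsilon^{-1}$.

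The key step, and the main obstacle, is a lower bound for $c'(v)$ along the chosen characteristic. We need $|v|\gtrsim\varepsilon$ on a time interval of length $\sim\varepsilon^{-(p-1)}$, or $\varepsilon^{-1}$ in the case \eqref{Eq:c assump p=2 2}, where $c'\ge c'(0)/2>0$ near $0$ makes the bound immediate. The plan for this step is as follows.

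First, because $\varphi_x,\psi\to0$ at infinity, the characteristic starting from a point near $x_0$ separates from the opposite family. After a time of order $O(1)$, which is long enough for it to escape the region where the other invariant is supported, that invariant is $O(\varepsilon\delta)$ small. Here we use that $S$ is constant on forward characteristics and that its data decay at infinity.

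Second, on our characteristic $R$ is constant and equal to its initial value $R_0\approx\varepsilon(\psi+\varphi_x)(x_0)$. We must arrange $R_0\neq0$. Since $(\psi+\varphi_x)$ vanishes at infinity and has positive derivative at $x_0$, we may shift $x_0$ slightly to a point where $(\psi+\varphi_x)\neq0$ while keeping $(\psi+\varphi_x)'>0$. Then $\Phi(v)=(R-S)/2\approx R_0/2$, so $|v|\ge c_0\varepsilon$ for all later times.

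Third, with this bound, $c'(v)\ge C(c_0\varepsilon)^{p-2}$, and the integral reaches $\tilde r(0)^{-1}\sim\varepsilon^{-1}$ by time $C'\varepsilon^{-(p-1)}$. The initial $O(1)$ stretch contributes only positively because $c'\ge0$, so it does not spoil the estimate.

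Finally, we argue by contradiction. If $T^*>C'\varepsilon^{-(p-1)}$, then $\tilde r$ would become infinite before $T^*$, which contradicts the bounds on $u_{tx}$ and $u_{xx}$. Since $u_{tx}$ and $u_{xx}$ are combinations of $r$ and $s$, while $u$, $u_t$, $u_x$ remain bounded and in $U$, the only way the lifespan can end is through unboundedness of $r$ or $s$. This also gives the stated $\limsup$ characterization of the blowup.
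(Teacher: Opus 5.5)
Your proposal is correct and follows essentially the paper's proof. It uses the Riccati equations for $\sqrt{c}\,R_x$ and $\sqrt{c}\,S_x$, a small shift of $x_0$ so that $\psi\pm\varphi_x\neq0$ there, decay of the opposite Riemann invariant after an $\varepsilon$-independent time to get $|u_x|\gtrsim\varepsilon$ and hence $c'(u_x)\gtrsim\varepsilon^{p-2}$ along the chosen characteristic, and $c'\ge0$ to discard the initial stretch; your explicit reciprocal formula in place of the paper's comparison function $f$ is only cosmetic. One sign slip needs fixing: both Riccati equations carry the plus sign, in particular $\frac{d}{dt}\tilde s=\frac{c'(v)}{2c(v)^{3/2}}\tilde s^{2}$ along forward characteristics, and (ii) gives $s(0,x_0)=\varepsilon\bigl(\psi_x-c(\varepsilon\varphi_x)\varphi_{xx}\bigr)(x_0)>0$ rather than $-s>0$, so case (ii) is exactly the mirror image of case (i).
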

Examples of functions $c(\theta)$ satisfying the assumptions of Theorems 1 and 2 include the following:
\begin{align*}
	 & c(\theta)=(1+A|\theta|^{p-2}\theta)^{\frac{1}{2}}  \,\,(A>0),               \\
	 & c(\theta)=e^{C|\theta|^{p-2}\theta},                                        \\
	 & c(\theta)=1+A|\theta|^{p-2}\theta+B|\theta|^{q-2}\theta\,\,(A,B>0\,,\,q>p).
\end{align*}

\begin{rem}
	Under the condition $\psi_x(x) \pm c(\varepsilon \varphi_x(x))\varphi_{xx}(x) \leq 0$ for all $x \in \mathbb{R}$ and $c'(0)>0$, the Cauchy problem \eqref{Eq:Main} possesses a time-global solution.
\end{rem}

\subsection{Known results}
By setting $u_x=v$ and $u_t=w$, the equation we treat in this paper can be reduced to a $2\times2$ system of hyperbolic conservation laws
\begin{equation*}
	\begin{cases}
		v_t-w_x=0, \\
		w_t-c(v)^2v_x=0.
	\end{cases}
\end{equation*}
Lax \cite{Lax64} and John \cite{John} investigated more general $2\times 2$ and $n \times n$ systems of hyperbolic conservation laws, respectively.
By applying these results, we can obtain
\begin{equation*}
	C\varepsilon^{-1} \leq T^* \leq C'\varepsilon^{-1}
\end{equation*}
for the equation
\begin{equation*}
	u_{tt}-c(u_x)^2u_{xx}=0
\end{equation*}
for the case $c(u_x)>0\,,\,c'(0) \not = 0$.
Note that, similar to our main theorem, the lower bound holds for general initial data, while the upper bound holds for special initial data.
In the case of the assumption \eqref{Eq:c assump p=2}, our result is identical to that of Lax and John and is not new.
The true novelty of our work lies in the case of the assumption \eqref{Eq:c assump1}.
By Sugiyama \cite{Sugiyama16}, a sufficient condition for equation
\begin{equation*}
	u_{tt}-(c(u)^2u_x)_x=0 \hspace{9pt} \text{with}\hspace{9pt}  c(u)=(1+u)^a,\,a>0
\end{equation*}
to degenerate, that is, to become $c(u) \rightarrow 0$, is given.
For the problem considered in the present paper, this suggests that \(u_x\) can actually leave the domain \(U\) of \(c(\theta)\) in finite time.
Furthermore, Chen, Pan and Zhu \cite{CPZ} established necessary and sufficient conditions for the blowup of the compressible Euler equation
\begin{equation*}
	\begin{cases}
		u_t - v_x = 0, \\
		v_t + p(u)_x=0,
	\end{cases}
\end{equation*}
where $p(u)=Ku^{-\gamma},\,\gamma>1$,which forms a hyperbolic system.
The equations described above all satisfy condition $c'(0)\not=0$. Equations possessing this property are termed genuinely nonlinear and have been the subject of extensive research since their definition in Lax \cite{Lax57}.
It is therefore natural to ask whether the blowup occurs in the case where $c'(0)=0$, which is close to the linear wave equation\, $u_{tt}-c^2u_{xx}=0.$
In the direction of John’s work \cite{John}, Li, Zhou and Kong \cite{LZK} and Hoshiga \cite{Hoshiga} obtained related results for general quasilinear hyperbolic systems.
In \cite{LZK}, Li, Zhou and Kong introduced the notion of weak linear degeneracy, which lies between genuine nonlinearity and linear degeneracy, and obtained upper and lower bounds for the lifespan of solutions.
In terms of the present problem, their finite-order setting corresponds to a smooth integer-order counterpart of assumption \eqref{Eq:c assump1}.
However, their result is formulated for sufficiently smooth coefficients and their initial data are assumed to decay faster than $|x|^{-1}$ at spatial infinity.
Hoshiga \cite{Hoshiga} studied the corresponding critical case for $2\times2$ quasilinear hyperbolic systems.
In the present setting, this corresponds to the limiting case $p=\infty$ in assumption \eqref{Eq:c assump1}, that is, the case where $c'(\theta)$ vanishes at the origin to infinite order.
He obtained upper and lower estimates of exponential type for the lifespan of solutions.
His result also assumes sufficiently fast spatial decay of the initial data (see also the section $5$).
Their results are based on John's method.
On the other hand, our argument is based on the method of Riemann invariants
and characteristics in the sense of Lax. Owing to the special structure of
the second-order equation considered here, the proof is considerably shorter
and more direct.
More precisely, in proving the lower bound for \(T^*\), we estimate \(c'(u_x)\) from above by using Taylor's formula and the inverse function theorem.
In proving the upper bound for \(T^*\), the key point is to use the lower bound for \(T^*\), which allows us to take sufficiently large times and to obtain a lower estimate for \(G(u_x)\) along characteristic curves.
In particular, we can treat wave speeds with non-integer orders of degeneracy, including the case
$c'(\theta)\sim |\theta|^{p-2},\, p>2,$ under Hölder-type regularity assumptions.
Haruyama and Takamura \cite{Haruyama} treated the case
$c(\theta)=\bigl(1+A|\theta|^{p-2}\theta\bigr)^{1/2}$ for $p>1$. By using an integral equation and an iteration argument, they proved the upper bound $T^*\le C\varepsilon^{-(p-1)}$.
They imposed compact support assumptions and a nonnegativity condition on certain integral quantities determined by the initial data, which are more restrictive than those in the present paper.
Also, when $p<2$, local-in-time existence and uniqueness of classical solutions are not known.

\subsection*{Notation}
The family of functions $C^{m,\alpha}(\Omega)$ is the class of m-times differentiable functions whose m-th derivatives are Hölder continuous with exponent $\alpha$ in $\Omega$.
$C_b^m(\mathbb{R})$ is the set of m-times differentiable functions such that the functions themselves and all their derivatives up to order m are bounded and continuous on $\mathbb{R}$.
Also, throughout this paper, $C$ and $C_j$ denote generic positive constants whose values may change from line to line.

\section{Preliminaries}

Let $G(u_x)=\int_{0}^{u_x}c(\theta)d\theta$ \,\,and define the Riemann invariants as follows:

\begin{equation}\label{Eq:Riemann1}
	\begin{cases}
		r(t,x)=u_t(t,x)+G(u_x), \\[10pt]
		s(t,x)=u_t(t,x)-G(u_x).
	\end{cases}
\end{equation}
From \eqref{Eq:Main} and the definitions of \,$G(u_x)$,

\begin{equation}\label{Eq:Riemann2}
	\begin{cases}
		r_t-c(u_x)r_x=0, \\
		s_t+c(u_x)s_x=0.
	\end{cases}
\end{equation}
From the equation in \eqref{Eq:Riemann1}, differentiating the both sides of the equations in \eqref{Eq:Riemann2} with $x$, we have

\begin{equation}\label{Eq:Riemann3}
	\begin{cases}
		(\partial_t-c\partial_x)r_x=\frac{c'}{2c}r_x(r_x-s_x), \\[10pt]
		(\partial_t+c\partial_x)s_x=\frac{c'}{2c}s_x(s_x-r_x).
	\end{cases}
\end{equation}
For $c(u_x)$, the positive and negative characteristics are solutions to
the following differential equations:
\begin{equation*}
	\begin{cases}
		\frac{dx_\pm}{dt}(t)=\pm c(u_x(t,x_\pm(t))), \\[10pt]
		x_\pm(\tau)=y.
	\end{cases}
\end{equation*}
We denote the characteristics through $(\tau,y)$ by $x_\pm(t;\tau,y)$
and, we simply write $x_\pm(t)$ for $x_\pm(t;\tau,y)$ unless there is a risk of confusion. Using the chain rule, from \eqref{Eq:Riemann2}, we obtain:

\begin{equation}\label{Eq:Invariance1}
	\begin{cases}
		\frac{d}{dt}r(t,x_{-}(t))=0, \\[10pt]
		\frac{d}{dt}s(t,x_{+}(t))=0.
	\end{cases}
\end{equation}
Integrating \eqref{Eq:Invariance1} over $[0,t]$, we have
\begin{equation}\label{Eq:Invariance2}
	\begin{cases}
		r(t,x_{-}(t))=r(0,x_{-}(0)), \\[10pt]
		s(t,x_{+}(t))=s(0,x_{+}(0)).
	\end{cases}
\end{equation}
That is, $r$ and $s$ are invariant on $x_{-}(t)$ and $x_{+}(t)$, respectively.
The following conservation laws hold for the Riemann invariants $r,s$.

\begin{prop}
	For $t \in [0,T^*)$,
	\begin{equation}\label{Eq:Conservation}
		\|r(t)\|_{L^\infty}=\|r(0)\|_{L^\infty}, \quad \|s(t)\|_{L^\infty}=\|s(0)\|_{L^\infty}.
	\end{equation}
\end{prop}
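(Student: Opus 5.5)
The plan is to read the conservation law off the invariance relations \eqref{Eq:Invariance2}, combined with the fact that the characteristic maps are bijections of $\mathbb{R}$. Fix $t\in[0,T^*)$ and pick $T$ with $t<T<T^*$. By the definition of $T^*$, on $[0,T]\times\mathbb{R}$ the quantities $u_x,u_t,u_{xx},u_{tx},u_{tt}$ are bounded and continuous. Also $\overline{u_x([0,T]\times\mathbb{R})}\subset U$, so $c(u_x)$ is bounded and $C^1$ in $x$ with bounded derivative $c'(u_x)u_{xx}$ there.

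The first step is to show that the characteristics are well defined on all of $[0,T]$ and that $y\mapsto x_\pm(0;t,y)$ is a bijection of $\mathbb{R}$.
\begin{itemize}
\item The vector field $\pm c(u_x(t,x))$ is continuous, bounded, and Lipschitz in $x$ uniformly in $t\in[0,T]$. Picard--Lindel\"of therefore gives unique global solutions $x_\pm(\cdot;\tau,y)$ on $[0,T]$ for every $(\tau,y)\in[0,T]\times\mathbb{R}$, with no escape to infinity in finite time.
\item By uniqueness, the backward map $y\mapsto x_\pm(0;t,y)$ and the forward map $z\mapsto x_\pm(t;0,z)$ are inverse to each other.
\end{itemize}
Hence the backward map is a bijection (indeed a homeomorphism) of $\mathbb{R}$.

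The second step uses \eqref{Eq:Invariance2}, which gives $r(t,y)=r(0,x_-(0;t,y))$ for every $y$. Taking the supremum over $y$, and using that $x_-(0;t,\cdot)$ is onto $\mathbb{R}$, we get
\begin{equation*}
\|r(t)\|_{L^\infty}=\sup_{y}|r(0,x_-(0;t,y))|=\sup_{z}|r(0,z)|=\|r(0)\|_{L^\infty}.
\end{equation*}
Surjectivity is what gives equality and not just an inequality. Since $r$ is continuous, the $L^\infty$ norm coincides with the supremum. The argument for $s$ is identical, using $x_+$.

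The one point needing care is justifying \eqref{Eq:Invariance1} along the characteristics, which requires $r$ and $s$ to be $C^1$. This holds because $r=u_t+G(u_x)$ with $G\in C^1$, and $u_t,u_x\in C^1$ on $[0,T]\times\mathbb{R}$ by the classical-solution assumption. The chain rule then applies, and \eqref{Eq:Riemann2} yields the vanishing derivative. I expect no serious obstacle: the main work is the global existence and bijectivity of the characteristic flow, which follows from the uniform bound on $c(u_x)$ and the Lipschitz bound on it.
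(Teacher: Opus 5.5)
Your proposal is correct and follows the same route as the paper, whose proof is the one-line remark that the statement follows from \eqref{Eq:Invariance2} together with the bijectivity of $x\mapsto x_\pm(t;0,x)$. You also supply the details the paper leaves implicit: global existence of the characteristics via the Lipschitz bound on $c(u_x)$ over $[0,T]\times\mathbb{R}$, surjectivity of the backward map (which gives equality rather than just an inequality), and the $C^1$ regularity of $r,s$ needed for the chain rule.
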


\begin{proof}
	This follows easily from \eqref{Eq:Invariance2} and that the maps from $x$ to $x_{\pm}(t;0,x)$ for all $t \in [0,T^*)$ are bijections.
\end{proof}
\vspace{10pt}
Let $F_1=\sqrt{c}r_x$ and $F_2=\sqrt{c}s_x$, then we obtain the following Riccati-type differential equations:

\begin{align}
	 & \quad \frac{d}{dt}F_1(t,x_{-}(t))=\gamma F_1^2,  \label{Eq:Riccati1} \\[3pt]
	 & \quad \frac{d}{dt}F_2(t,x_{+}(t))=\gamma F_2^2   \label{Eq:Riccati2}
\end{align}
where $\gamma=\gamma(u_x)=\frac{c'(u_x)}{2c(u_x)^{\frac{3}{2}}}$.
Integrating \eqref{Eq:Riccati1} and \eqref{Eq:Riccati2} over $[0,t]$, we have
\begin{equation}\label{Eq:integral}
	F_j(t)=F_j(0)+\int_{0}^{t} \gamma(u_x(\tau))F_j(\tau)^2 d\tau \hspace{8pt} (j=1,2).
\end{equation}
Indeed,
\begin{align*}
	\frac{d}{dt}F_1(t,x_{-}(t)) & =\frac{d}{dt}\{\sqrt{c(u_x(t,x_{-}(t)))}r_x(t,x_{-}(t))\}                                               \\
	                            & =\frac{c'}{2\sqrt{c}}(\partial_t-c\partial_x)u_x \cdot r_x + (\partial_t-c\partial_x)r_x \cdot \sqrt{c} \\
	                            & =\frac{c'}{2\sqrt{c}}s_xr_x + \frac{c'}{2\sqrt{c}}r_x(r_x-s_x)                                          \\
	                            & = (\frac{c'}{2c^{\frac{3}{2}}})(\sqrt{c}r_x)^2.
\end{align*}
The same holds for $F_2$.
The calculations above are formal because the third-order derivative of $u$ appears, but the derivation of the integral equation \eqref{Eq:integral} is rigorously justified by Manfrin \cite{Manfrin}.
\section{Proof of Theorem 1}
In what follows, we consider only the case of assumption \eqref{Eq:c assump1}. The case of assumption \eqref{Eq:c assump p=2} can be handled in exactly the same way.
\begin{prop}
	There exists $\varepsilon_0>0$ such that if $0 < \varepsilon \leq \varepsilon_0$, then, for $t \in [0,T^*)$,
	\begin{align}
		\|u_x(t)\|_{L^\infty}      & \leq C_1\varepsilon,  \label{Eq:u_x}                \\
		\|c'(u_x(t))\|_{L^\infty}  & \leq C_1\varepsilon^{p-2}, \label{Eq:c_prime_upper} \\
		\|c(u_x(t))-1\|_{L^\infty} & \leq C_1\varepsilon^{p-1}. \label{Eq:c_minus_1}
	\end{align}
\end{prop}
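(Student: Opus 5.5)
The plan is to bound $u_x$ through the Riemann invariants and their conservation law (Proposition 1), and then to read off the bounds on $c'$ and $c-1$ from Taylor's formula under assumption \eqref{Eq:c assump1}.

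\textbf{Step 1: control of $G(u_x)$.} From \eqref{Eq:Riemann1} we have $G(u_x)=\frac{1}{2}(r-s)$. Proposition 1 then gives, for $t\in[0,T^*)$,
\begin{equation*}
\|G(u_x(t))\|_{L^\infty}\le \tfrac12\bigl(\|r(0)\|_{L^\infty}+\|s(0)\|_{L^\infty}\bigr).
\end{equation*}
At $t=0$ we have $r(0)=\varepsilon\psi+G(\varepsilon\varphi_x)$ and $s(0)=\varepsilon\psi-G(\varepsilon\varphi_x)$. Since $c$ is continuous with $c(0)=1$, we may shrink to an interval $I=[-\delta,\delta]\subset U$ on which $\frac12\le c\le 2$. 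For $\varepsilon\|\varphi_x\|_{L^\infty}\le\delta$ this gives $|G(\varepsilon\varphi_x)|\le 2\varepsilon\|\varphi_x\|_{L^\infty}$. Hence $\|G(u_x(t))\|_{L^\infty}\le C\varepsilon$.

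\textbf{Step 2: inverting $G$.} On $I$, $G$ is strictly increasing with $G'=c\ge\frac12$. Therefore $|\theta|\le 2|G(\theta)|$ for $\theta\in I$, which is the inverse function theorem in its crude one-dimensional form. The subtle point is that we need $u_x(t,x)\in I$ in the first place. For this we argue by continuity in $t$. We have $u_x$ continuous on $[0,T^*)\times\mathbb{R}$, and $\overline{u_x([0,T)\times\mathbb R)}\subset U$ for every $T<T^*$ by the definition of $T^*$. Moreover $G$ is increasing on the connected component of $U$ containing $0$, so $|G(\theta)|\ge G(\delta)\wedge|G(-\delta)|\ge\delta/2$ whenever $|\theta|\ge\delta$ in that component.

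Now choose $\varepsilon_0$ so small that $C\varepsilon_0<\delta/2$. Then $|G(u_x)|\le C\varepsilon<\delta/2$ forbids $u_x$ from ever reaching $\pm\delta$. Since $u_x(0,\cdot)=\varepsilon\varphi_x$ starts inside $(-\delta,\delta)$ and $u_x(t,\cdot)$ is continuous in $t$, it stays in $I$ for all $t$. We conclude $|u_x|\le 2|G(u_x)|\le C_1\varepsilon$, which is \eqref{Eq:u_x}. I expect this continuity/bootstrap step to be the main obstacle: one has to be careful that $U$ is only a neighborhood of the origin and that $G$ is invertible only on the component containing $0$.

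\textbf{Step 3: bounds on $c'$ and $c-1$.} Write $k=\lceil p-2\rceil$. By \eqref{Eq:c assump1}, $c^{(j)}(0)=0$ for $1\le j\le k$, and $|c^{(k)}(\theta)|\le C|\theta|^{p-\lceil p-1\rceil}=C|\theta|^{p-2-(k-1)}$. Note $\lceil p-1\rceil=k+1$, so the exponent is $p-1-k\in(0,1]$.

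Taylor's formula for $c'$ around $0$ up to order $k-1$, with integral remainder, gives
\begin{equation*}
|c'(\theta)|\le \frac{|\theta|^{k-1}}{(k-1)!}\sup_{|\eta|\le|\theta|}|c^{(k)}(\eta)|\le C|\theta|^{k-1+p-1-k}=C|\theta|^{p-2}.
\end{equation*}
When $k=1$ this is just the Hölder estimate for $c'$ itself. Integrating once more, using $c(0)=1$, yields $|c(\theta)-1|\le C|\theta|^{p-1}$.

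Inserting $|u_x|\le C_1\varepsilon$ from Step 2 gives \eqref{Eq:c_prime_upper} and \eqref{Eq:c_minus_1}, after enlarging $C_1$. Under assumption \eqref{Eq:c assump p=2} the same argument applies with $p=2$, using only $c\in C^1$.
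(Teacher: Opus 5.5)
Your proposal is correct and follows the paper's own route: bound $r,s$ at $t=0$, use the conservation law to get $\|G(u_x)\|_{L^\infty}\le C\varepsilon$, invert $G$ near the origin, and then apply Taylor's formula with the H\"older bound on $c^{(\lceil p-2\rceil)}$. Your integral-remainder estimate is the paper's Lagrange-remainder estimate with $k-1=\lceil p-3\rceil$. The only difference is in the inversion step, where you keep $u_x$ in $[-\delta,\delta]$ by an explicit continuity argument, whereas the paper simply invokes the inverse function theorem and the injectivity of $G$ coming from $c>0$; this is welcome extra care rather than a different approach.
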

\begin{proof}
	By definition of Riemann invariant $r$, we have
	\begin{equation*}
		r(0,x)=\varepsilon \psi(x) + G(\varepsilon \varphi_x).
	\end{equation*}
	Noting that $c(0)=1$ and $c(\theta)$ is continuous, we obtain
	\begin{align*}
		|G(\varepsilon \varphi_x)| & =\left|\int_{0}^{\varepsilon \varphi_x}c(\theta)d\theta \, \right| \\
		                           & \leq C\varepsilon\|\varphi_x\|_{L^\infty}
	\end{align*}
	for sufficiently small $\varepsilon>0$.
	Therefore,
	\begin{equation*}
		|r(0,x)| \leq \varepsilon \|\psi\|_{L^\infty}+C\varepsilon\|\varphi_x\|_{L^\infty}.
	\end{equation*}
	Since the same applies to $s$, taking $\varepsilon>0$ sufficiently small, from \eqref{Eq:Conservation} we have
	\begin{align*}
		\|r(t)\|_{L^\infty} & =\|r(0)\|_{L^\infty} \leq C\varepsilon , \\
		\|s(t)\|_{L^\infty} & =\|s(0)\|_{L^\infty} \leq C\varepsilon.
	\end{align*}
	\vspace{10pt}
	Since $G(u_x)=\frac{1}{2}(r-s)$, combined with the above, we get
	\begin{equation}\label{Eq:G_upper}
		\|G(u_x)\|_{L^\infty} \leq C\varepsilon.
	\end{equation}
	Furthermore, since $c(\theta)>0$ for $\theta \in U$,
	\begin{equation*}
		u_x=G^{-1}(G(u_x))
	\end{equation*}
	holds. Moreover, since $G$ is differentiable in a neighborhood of the origin and $G'(0)=1 \neq 0$, and \eqref{Eq:G_upper} holds,
	the inverse function theorem implies that \(G^{-1}\) is differentiable near the origin and that $(G^{-1})'$ is bounded there.
	Thus, by the fundamental theorem of calculus,
	\begin{align*}
		|u_x|=|G^{-1}(G(u_x))| & \leq \left|\int_{0}^{G(u_x)} \left|\frac{dG^{-1}}{d\theta}(\theta)\right|d\theta\right| \\
		                       & \leq C\varepsilon
	\end{align*}
	holds from \eqref{Eq:G_upper}. Therefore, \eqref{Eq:u_x} holds.
	For $2<p\leq3$, \eqref{Eq:c_prime_upper} and \eqref{Eq:c_minus_1} are clear from  the assumption \eqref{Eq:c assump1}.
	For $p>3$, by Taylor expansion of $c'(u_x)$ up to order $\lceil p-3 \rceil$ and the assumption \eqref{Eq:c assump1},
	\begin{align*}
		|c'(u_x)| & =\left|\frac{c^{\lceil p-2\rceil}(\xi u_x)}{\lceil p-3 \rceil !}\right||u_x|^{\lceil p-3 \rceil} \\
		          & \leq C|u_x|^{p-\lceil p-1 \rceil+\lceil p-3 \rceil}                                              \\
		          & = |u_x|^{p-2}
	\end{align*}
	holds. Therefore, \eqref{Eq:c_prime_upper} and \eqref{Eq:c_minus_1} also hold.
\end{proof}

From \eqref{Eq:c_minus_1}, we have
\begin{equation}\label{Eq:c_near_1}
	\frac{1}{2} \leq c(u_x) \leq \frac{3}{2} \hspace{8pt}\text{if} \hspace{8pt} 0<\varepsilon \leq \varepsilon_0.
\end{equation}

\begin{proof}[Proof of Theorem 1]

	Fix $x\in \mathbb{R}$ arbitrarily.
	By \eqref{Eq:Riccati1}, on the characteristic $x_{-}(t)$, we have
	\begin{equation*}
		F_1(t)=F_1(0)+\int_{0}^{t}\gamma(u_x(\tau))F_1(\tau)^2d\tau.
	\end{equation*}
	From \eqref{Eq:c_prime_upper} and \eqref{Eq:c_near_1}, there exists a positive constant $\gamma_1>0$ such that for sufficiently small $\varepsilon>0$,
	\begin{equation*}
		|\gamma(u_x(t))| \leq \gamma_1\varepsilon^{p-2}.
	\end{equation*}
	Furthermore, on the characteristic curve $x_{-}(t)$ passing through $(0,x)$,
	\begin{equation*}
		|F_1(0)| =|\sqrt{c(u_x(0,x))}  (\varepsilon \psi_x(x)  +\varepsilon c(u_x(0,x))\varphi_{xx}(x))| \leq C\varepsilon,
	\end{equation*}
	hence,
	\begin{equation*}
		|F_1(t)|\leq C\varepsilon+\gamma_1\varepsilon^{p-2} \int_{0}^{t}F_1(\tau)^2d\tau.
	\end{equation*}
	Here, let $f(t)$ be the solution to the integral equation
	\begin{equation*}
		f(t)=2C\varepsilon+\gamma_1\varepsilon^{p-2}\int_{0}^{t}f(\tau)^2d\tau.
	\end{equation*}
	Solving this integral equation, we have
	\begin{equation*}
		f(t)=\frac{2C\varepsilon}{1-2C\gamma_1\varepsilon^{p-1}t}
	\end{equation*}
	and we can easily check that \,$|F_1(t)|< f(t)$\, by comparison argument.
	Since a similar estimate holds for $F_2$, we obtain
	\begin{equation*}
		\frac{1}{2C\gamma_1}\varepsilon^{-(p-1)} \leq T^*.
	\end{equation*}
\end{proof}

\section{Proof of Theorem 2}
In what follows, we consider only the case of the assumptions \eqref{Eq:c assump1} and \eqref{Eq:c assump2}. The case of the assumption \eqref{Eq:c assump p=2 2} can be handled in exactly the same way.
Assumptions $(\mathrm{i})$ and $(\mathrm{ii})$ yield $(\mathrm{i})'$ and $(\mathrm{ii})'$, respectively. There exists a point $x_0' \in \mathbb{R}$
\begin{align*}
	(\mathrm{i})'  & \,\,\psi(x_0')+ \varphi_x(x_0')\neq0 \hspace{8pt} \text{and} \hspace{8pt} \psi_x(x_0')+\varphi_{xx}(x_0')>0 \\[7pt]
	(\mathrm{ii})' & \,\,\psi(x_0')- \varphi_x(x_0')\neq0 \hspace{8pt} \text{and} \hspace{8pt} \psi_x(x_0')-\varphi_{xx}(x_0')>0
\end{align*}
We relabel this point $x_0'$ as $x_0$.
\begin{lem}
	For sufficiently small $\varepsilon>0$ and sufficiently large time $t>0$,
	\begin{align}
		\text{Under assumption (i),} \,\,  & c'(u_x) \geq C\varepsilon^{p-2} \hspace{7pt}\text{on $x_{-}(t;0,x_0)$} ,\label{Eq:c_prime_lower1} \\
		\text{Under assumption (ii),} \,\, & c'(u_x) \geq C\varepsilon^{p-2} \hspace{7pt}\text{on $x_{+}(t;0,x_0)$} .\label{Eq:c_prime_lower2}
	\end{align}
\end{lem}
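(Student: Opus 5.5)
Under assumption (i) the plan is to track the Riemann invariant $s$ rather than $r$, since $r$ is constant along $x_-$ while $s$ is transported along $x_+$. Along $x_-(t;0,x_0)$ we have $G(u_x)=\tfrac12(r-s)$. By \eqref{Eq:Invariance2}, $r(t,x_-(t))=r(0,x_0)=\varepsilon\psi(x_0)+G(\varepsilon\varphi_x(x_0))$. Because $G'(0)=c(0)=1$, this equals $\varepsilon(\psi(x_0)+\varphi_x(x_0))+o(\varepsilon)$, and it has a definite size $|r(0,x_0)|\ge 2\delta\varepsilon$ with $\delta>0$, by the nondegeneracy in $(\mathrm{i})'$.

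The value $s(t,x_-(t))$ equals $s(0,y(t))$, where $y(t)$ is the foot of the positive characteristic through $(t,x_-(t))$. By \eqref{Eq:c_near_1} the two characteristics separate at speed at least $1$. Hence $x_-(t)\to-\infty$ and $y(t)\le x_-(t)-t/2\to-\infty$ as $t$ grows, which is allowed because Theorem 1 gives $T^*\ge C\varepsilon^{-(p-1)}$. Using the decay hypothesis $\varphi_x,\psi\to0$, we get $|s(0,y)|\le \varepsilon(|\psi(y)|+C|\varphi_x(y)|)\le \delta\varepsilon$ once $|y|\ge R$, with $R$ independent of $\varepsilon$. So for $t\ge t_0:=2(R+|x_0|)+C$, a time independent of $\varepsilon$ and smaller than $T^*$ for small $\varepsilon$, we obtain $|G(u_x(t,x_-(t)))|\ge \tfrac12\delta\varepsilon$. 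The sign of $G(u_x)$ is fixed and equal to the sign of $\psi(x_0)+\varphi_x(x_0)$.

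Next I convert this to a statement about $u_x$. Since $G$ is increasing and $G^{-1}$ is Lipschitz near $0$ (as in the proof of Proposition 4), $|u_x|\ge c_0\varepsilon$ along $x_-(t;0,x_0)$ for $t\ge t_0$, and $u_x$ keeps a fixed sign. Assumption \eqref{Eq:c assump2} then gives $c'(u_x)\ge C|u_x|^{p-2}\ge C\varepsilon^{p-2}$, which is \eqref{Eq:c_prime_lower1}. Assumption (ii) is symmetric: exchange the roles of $r,s$ and $x_\pm$, and use $s(0,x_0)=\varepsilon(\psi-\varphi_x)(x_0)+o(\varepsilon)$.

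The main obstacle is the sign. Assumption \eqref{Eq:c assump2} as written forces $c'\ge0$ on both sides of $0$, so if $c$ is odd-type, as in the examples, it only holds for $\theta\ge0$. In that case I would need $u_x>0$ along the characteristic, i.e.\ $r(0,x_0)>0$ under (i) (and $s(0,x_0)<0$ under (ii)). I would obtain this from the choice of $x_0'$ in $(\mathrm{i})'$. Since $\psi_x+\varphi_{xx}>0$ at $x_0$, the function $\psi+\varphi_x$ is strictly increasing near $x_0$, so a nearby point $x_0'$ where $\psi+\varphi_x$ has the required sign and the derivative is still positive can be selected. This choice has to be made once, independently of $\varepsilon$, so that all constants above remain uniform in $\varepsilon$.
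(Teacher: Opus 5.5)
Your first three paragraphs are correct and follow the paper's proof essentially step for step. You write $2G(u_x)=r-s$ on $x_-(t;0,x_0)$, with $r$ frozen at a value of size $\sim\varepsilon$ by $(\mathrm{i})'$. You make $s=O(\delta\varepsilon)$ by pushing the foot of the positive characteristic below $x_0-t$ (using $c\ge\tfrac12$) into the region where $\psi,\varphi_x$ are small, for an $\varepsilon$-independent large $t$ that Theorem 1 makes admissible. Then $|u_x|\gtrsim\varepsilon$ and \eqref{Eq:c assump2} finish the argument.

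Two corrections, neither affecting this main line.

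The lower bound $|u_x|\ge\tfrac23|G(u_x)|$ comes from $G$ being Lipschitz ($c\le\tfrac32$, i.e.\ \eqref{Eq:G_to_u}). Lipschitz continuity of $G^{-1}$, as in Proposition 4, gives only the upper bound.

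The ``main obstacle'' in your last paragraph is not there. \eqref{Eq:c assump2} is assumed for all $\theta\in U$, and the examples satisfy it on both sides of $0$: for instance $\frac{d}{d\theta}\bigl(|\theta|^{p-2}\theta\bigr)=(p-1)|\theta|^{p-2}$ is even and nonnegative. So only $|u_x|$ matters, exactly as in the paper. That is just as well, since your proposed remedy fails in general. Monotonicity of $\psi+\varphi_x$ near $x_0$ only gives nonvanishing at nearby points, not a prescribed sign. For example, with $\varphi=0$, $\psi=-e^{-x^2}$ and $x_0>0$, condition $(\mathrm{i})$ holds but $\psi+\varphi_x<0$ everywhere.
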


\begin{proof}
	First, from \eqref{Eq:c_near_1} and the definition of $G(u_x)$,
	\begin{equation}\label{Eq:G_to_u}
		\frac{1}{2}|u_x| \leq |G(u_x)| \leq \frac{3}{2}|u_x|
	\end{equation}
	\vspace{2pt}
	holds for sufficiently small $\varepsilon>0$. Since $G(u_x)=\frac{1}{2}(r-s)$, we have
	\begin{equation}\label{Eq:G_expand}
		2G(u_x) =\varepsilon \psi(x_-(0))+G(\varepsilon \varphi_x(x_{-}(0)))-\varepsilon \psi(x_+(0)) +G(\varepsilon \varphi_x(x_{+}(0))).
	\end{equation}

	\begin{figure}[http]
		\centering
		\begin{tikzpicture}[scale=1.0]
			\draw[->] (-3.7,0) -- (6,0) node[right] {$x$};
			\draw[->] (0,-0.5) -- (0,4) node[above] {$\tau$};

			\draw[thick, black, name path=minus] (3.8,0) node[below]{$x_0$} to[out=140, in=-80] (-0.5,4.0) node[left]{$x_-(\tau)$};

			\draw[thick, black!60!black, name path=plus] (-1.2,0) node[below]{$x_{+}(0)$} to[out=40, in=-115] (2.5,4.0) node[right]{$x_{+}(\tau)$};

			\path [name intersections={of=minus and plus, by=P}];
			\fill (P) circle (2pt) node[right, xshift=10pt] {$(t, x_-(t))$};

		\end{tikzpicture}
		\caption{Characteristics }
	\end{figure}
	Assume (i).
	Fix $t\in [0,T^*)$ arbitrarily, and consider the positive characteristic curve passing through $x_{-}(t)$ at time $t$:
	\begin{equation*}
		\begin{cases}
			\frac{dx_{+}}{d\tau }(\tau)=c(u_x(\tau,x_{+}(\tau))), \\[10pt]
			x_{+}(t)=x_{-}(t).
		\end{cases}
	\end{equation*}
	Then
	\begin{align*}
		x_{-}(t) & =x_0-\int_{0}^{t}c(u_x(\tau,x_{-}(\tau)))d\tau,      \\[5pt]
		x_{+}(t) & =x_{+}(0)+\int_{0}^{t}c(u_x(\tau,x_{+}(\tau)))d\tau.
	\end{align*}
	Since $x_{+}(t)=x_{-}(t)$, we have $x_{+}(0)  = \, x_0-\int_{0}^{t}\{c(u_x(\tau,x_{-}(\tau)))+c(u_x(\tau,x_{+}(\tau)))\}d\tau$.
	\vspace{5pt}
	From \eqref{Eq:c_near_1}, taking $\varepsilon>0$ sufficiently small allows us to have $x_{+}(0) < x_0-t$.
	Also, since $\psi(x), \varphi_x(x) \rightarrow 0$ as $|x| \rightarrow \infty$, for any $\delta>0$, there exists $R>0$ such that
	\begin{equation*}
		|\varphi_x(x)|,|\psi(x)| \leq \delta \hspace{8pt} \text{if} \hspace{8pt} x \leq -R.
	\end{equation*}
	Thus, for large time $t$ such that $t \geq R+x_0$,
	\begin{equation*}
		|\varepsilon \psi(x_+(0))-G(\varepsilon \varphi_x(x_+(0)))| \leq C\varepsilon \delta
	\end{equation*}
	by \eqref{Eq:G_to_u}.
	From \eqref{Eq:G_to_u},\eqref{Eq:G_expand}, we have
	\begin{equation*}
		|G(u_x(t,x_{-}(t)))|\geq C\varepsilon
	\end{equation*}
	Indeed, the Taylor expansion of $G(\varepsilon \varphi_x)$ yields for some $\xi \in (0,1)$
	\begin{equation*}
		G(\varepsilon \varphi_x)=\varepsilon \varphi_x + \frac{1}{2}c'(\xi \varepsilon \varphi_x)(\varepsilon \varphi_x)^2
	\end{equation*}
	and by taking $\delta>0$ such that $C\delta<|\psi(x_0)+\varphi_x(x_0)|-C'\varepsilon_0^{p-1}$, we obtain for $0<\varepsilon \leq \varepsilon_0$ that
	\begin{align*}
		|2G(u_x)| & \geq |\varepsilon \psi(x_0)+G(\varepsilon \varphi_x(x_0))|-C\varepsilon \delta       \\
		          & \geq |\psi(x_0)+\varphi_x(x_0)|\varepsilon-(C\delta+C'\varepsilon^{p-1})\varepsilon.
	\end{align*}
	Again, by \eqref{Eq:G_to_u},
	\begin{equation*}
		|u_x(t,x_-(t))| \geq \frac{2}{3}C\varepsilon.
	\end{equation*}
	Thus, taking $\varepsilon>0$ sufficiently small, \eqref{Eq:c_prime_lower1} holds for sufficiently large time $t$ by assumption \eqref{Eq:c assump2}.
	The case for assumption (ii) follows similarly, yielding \eqref{Eq:c_prime_lower2}.
\end{proof}

\vspace{5pt}

\begin{proof}[Proof of Theorem 2]
	We will only show the case $(\mathrm{i})$.
	Let $T\geq R+x_0$, and consider $t \geq T$. From \eqref{Eq:Riccati1}, on the characteristic $x_{-}(t)$ passing through $(0,x_0)$,
	\begin{equation*}
		F_1(t)=F_1(T)+\int_{T}^{t}\gamma(u_x(\tau))F_1(\tau)^2d\tau.
	\end{equation*}
	Furthermore, from \eqref{Eq:c_near_1} and \eqref{Eq:c_prime_lower1}, there exists a positive constant $\gamma_2>0$ such that for sufficiently small $\varepsilon>0$,
	\begin{equation*}
		\gamma_2\varepsilon^{p-2} \leq \gamma(u_x(t)).
	\end{equation*}
	By an argument similar to that in the proof of Lemma 5, Taylor expansion of $c(u_x)$ yields
	\begin{align*}
		 & F_1(0) =\sqrt{c(u_x(0,x_0))}(\varepsilon\psi_x(x_0)+\varepsilon c(u_x(0,x_0))\varphi_{xx}(x_0))\geq C\varepsilon , \\
		 & F_1(T)-F_1(0)  =\int_{0}^{T}\gamma(u_x(\tau))F_1(\tau)^2d\tau \geq 0.
	\end{align*}
	Hence,
	\begin{equation*}
		F_1(t)\geq C\varepsilon+\gamma_2\varepsilon^{p-2}\int_{T}^{t}F_1(\tau)^2d\tau.
	\end{equation*}
	Let $f(t)$ be the solution to the integral equation
	\begin{equation*}
		f(t)=C\varepsilon+\gamma_2\varepsilon^{p-2}\int_{T}^{t}f(\tau)^2d\tau.
	\end{equation*}
	Solving this integral equation, we have
	\begin{equation*}
		f(t)=\frac{C\varepsilon}{1-C\gamma_2\varepsilon^{p-1}(t-T)},
	\end{equation*}
	and we can easily check that \,$F_1(t) \geq f(t)$. Therefore,
	\begin{equation*}
		T^* \leq \frac{1}{C\gamma_2}\varepsilon^{-(p-1)}+T.
	\end{equation*}
	By taking $\varepsilon>0$ sufficiently small such that $T \leq \varepsilon^{-(p-1)}$,
	\begin{equation*}
		T^* \leq (\frac{1}{C\gamma_2}+1)\varepsilon^{-(p-1)}
	\end{equation*}
	The case $(\mathrm{ii})$ is similar if we consider \eqref{Eq:Riccati2}.
	This completes the proof.
\end{proof}

\section{The case where $c(\theta)$ is flat}
A similar exponential lifespan estimate was obtained by Hoshiga \cite{Hoshiga} for $2\times2$ quasilinear hyperbolic systems in the critical case.
Here we give a shorter proof based on the Riemann invariants of the present equation, under weaker decay assumptions on the initial data.
\begin{defin}[Gevrey Class]
	Let $\Omega$ be an open set in $\mathbb{R}$ and $s>1$. A function $f$ belongs to the Gevrey Class $G^s(\Omega)$ if there exist positive constants $C,A>0$ such that for all $m \in \mathbb{N}$,
	\begin{equation*}
		|f^{(m)}(\theta)| \leq CA^m(m!)^s, \hspace{10pt} \theta \in \Omega,
	\end{equation*}
\end{defin}
The advantage of this generalization is that the introduction of Gevrey class enables us to treat functions that are flat at \,$\theta=0$.

\begin{thm}
	Let $\varphi \in C_b^2(\mathbb{R})$ and $\psi \in C_b^1(\mathbb{R})$.
	If
	\begin{equation}\label{Eq:c assump flat}
		c \in G^s(U),\hspace{3pt}c(0)=1,\,c'(0)=\dots=c^{(m)}(0)=0, \hspace{10pt} \forall m \in \mathbb{N}
	\end{equation}
	is satisfied, then there exists $\varepsilon_0>0$ such that
	\begin{equation}\label{Eq:T^* Gevrey1}
		C_1e^{C_2\varepsilon^{-\frac{1}{s-1}}} \leq T^* \hspace{8pt}\text{if}\hspace{9pt}0<\varepsilon \leq \varepsilon_0,
	\end{equation}
	where $s>1$ and $U$ is a neighborhood of origin.

	Also, assuming the same conditions on the initial data as in Theorem 2, if
	\begin{equation}\label{Eq:c assump Gevrey}
		Ce^{-C|\theta|^{-\frac{1}{s-1}}} \leq c'(\theta) \hspace{8pt} \text{near} \hspace{5pt} \theta=0,
	\end{equation}
	then there exists $\varepsilon_0>0$ such that
	\begin{equation}\label{Eq:T^* Gevrey2}
		T^*\leq C_3e^{C_4\varepsilon^{-\frac{1}{s-1}}} \hspace{8pt}\text{if}\hspace{9pt}0<\varepsilon \leq \varepsilon_0.
	\end{equation}
\end{thm}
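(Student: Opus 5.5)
The plan is to repeat the proofs of Theorems 1 and 2 verbatim. The only change is that the power bound $|c'(u_x)|\le C\varepsilon^{p-2}$ is replaced by a flat bound of the form $|c'(u_x)|\le C e^{-C'\varepsilon^{-1/(s-1)}}$.

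\textbf{Smallness of $u_x$.} The first part of Proposition 4 only uses $c(0)=1$ and continuity. So it still gives $\|u_x(t)\|_{L^\infty}\le C_1\varepsilon$ and \eqref{Eq:c_near_1}.

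\textbf{Upper bound on $c'$.} For the key estimate, I apply Taylor's formula to $c'$ at the origin to order $m$. Since all derivatives vanish at $0$ and $c\in G^s(U)$, for every $m$ and $|\theta|\le C_1\varepsilon$,
\begin{equation*}
|c'(\theta)|\le \frac{|c^{(m+1)}(\xi\theta)|}{m!}|\theta|^m\le CA^{m+1}\frac{((m+1)!)^s}{m!}|\theta|^m\le C(m+1)^s\bigl(A'm^{s-1}|\theta|\bigr)^m ,
\end{equation*}
using Stirling. I then optimize over $m$ by taking $m\approx (eA'|\theta|)^{-1/(s-1)}$. This makes $A'm^{s-1}|\theta|\approx e^{-(s-1)}$, and yields
\begin{equation*}
|c'(\theta)|\le Ce^{-C_2'|\theta|^{-1/(s-1)}}.
\end{equation*}
The polynomial prefactor is absorbed by slightly shrinking the constant. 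Plugging in $|u_x|\le C_1\varepsilon$, and using that the bound is increasing in $|\theta|$, gives $|\gamma(u_x)|\le\gamma_1e^{-C_2\varepsilon^{-1/(s-1)}}$.

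\textbf{Lower bound on $T^*$.} The comparison argument of Theorem 1 now works with $\gamma_1\varepsilon^{p-2}$ replaced by this quantity. The majorant becomes
\begin{equation*}
f(t)=\frac{2C\varepsilon}{1-2C\gamma_1\varepsilon e^{-C_2\varepsilon^{-1/(s-1)}}t}.
\end{equation*}
Hence $T^*\ge (2C\gamma_1)^{-1}\varepsilon^{-1}e^{C_2\varepsilon^{-1/(s-1)}}$, and since $\varepsilon^{-1}\ge 1$ this gives \eqref{Eq:T^* Gevrey1}. I expect the optimization over $m$ to be the main technical point. It needs care to make the Stirling bookkeeping uniform in $\theta$ and to handle $m$ being an integer, so I would take the floor and check that the error only changes constants.

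\textbf{Upper bound on $T^*$.} I first rerun Lemma 5 unchanged. It gives $|u_x(t,x_-(t;0,x_0))|\ge c_0\varepsilon$ for $t\ge T=R+x_0$; this step uses only \eqref{Eq:G_to_u}, decay of the data, and a Taylor remainder $|c'|\le C$. By \eqref{Eq:c assump Gevrey}, which I interpret as holding with $c'\ge0$ and the exponential increasing in $|\theta|$, this yields $\gamma(u_x)\ge\gamma_2e^{-C\varepsilon^{-1/(s-1)}}$ along $x_-$. Here I must make sure the sign works: the lower bound on $|u_x|$ must feed into \eqref{Eq:c assump Gevrey} regardless of the sign of $u_x$, which is exactly what that assumption provides.

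The Riccati comparison from Theorem 2 then gives
\begin{equation*}
T^*\le T+\frac{1}{C\gamma_2}\varepsilon^{-1}e^{C\varepsilon^{-1/(s-1)}}.
\end{equation*}
Finally, $\varepsilon^{-1}\le e^{\varepsilon^{-1/(s-1)}}$ for small $\varepsilon$, and $T$ is fixed. So after enlarging the constant in the exponent, we obtain \eqref{Eq:T^* Gevrey2}.
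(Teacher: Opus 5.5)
Your proposal is correct and follows essentially the same route as the paper. You prove the flatness bound $|c'(\theta)|\le Ce^{-C'|\theta|^{-1/(s-1)}}$ by Taylor expansion with $m\sim|\theta|^{-1/(s-1)}$ (the paper's Proposition 8), rerun the comparison arguments of Theorems 1 and 2 with Lemma 5, and explicitly absorb the extra factor $\varepsilon^{-1}$ into the exponential, which the paper leaves implicit. Two harmless slips: Stirling is not needed, since $m!\le m^m$ already gives $(m+1)^s(Am^{s-1}|\theta|)^m$ as in the paper, and with your choice of $m$ the ratio $A'm^{s-1}|\theta|$ equals $e^{-1}$ rather than $e^{-(s-1)}$.
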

Examples of the function $c(\theta)$ satisfying the assumptions of Theorem 7 include
the following:
\begin{equation*}
	c(\theta)=1+\int_{0}^{\theta}g(\tau)d\tau
\end{equation*}
where
\begin{equation*}
	g(\theta) =
	\begin{cases}
		C_1e^{-C_2|\theta|^{-\frac{1}{s-1}}}  \hspace{9pt} \theta \not=0 \\
		0 \hspace{9pt} \theta=0
	\end{cases}
\end{equation*}
or
\begin{equation*}
	g(\theta) =
	\begin{cases}
		C_1|\theta|^{-\alpha}e^{-C_2|\theta|^{-\frac{1}{s-1}}}  \hspace{9pt} \theta \not=0 \\
		0 \hspace{9pt} \theta=0
	\end{cases}
\end{equation*}
with $\alpha \in \mathbb{R}$ or
\begin{equation*}
	g(\theta) =
	\begin{cases}
		C_1e^{-(C_2|\theta|^{-\frac{1}{s-1}}+C_3|\theta|^{-\frac{1}{s'-1}})}  \hspace{9pt} \theta \not=0 \\
		0 \hspace{9pt} \theta=0
	\end{cases}
\end{equation*}
with $s'>s $.
\begin{proof}
	The theorem can be proven similarly if we can show the upper bound estimate for $\|c'(u_x)\|_{L^\infty}$:
	\begin{equation}\label{Eq:c'Gevrey}
		\|c'(u_x)\|_{L^\infty} \leq C_1e^{-C_2\varepsilon^{-\frac{1}{s-1}}}.
	\end{equation}
	Since this immediately follows from the following Proposition 8, it suffices to show the following inequality to complete the proof similar to Theorem 1.
	By \eqref{Eq:c'Gevrey}, the integral inequalities
	\begin{equation*}
		|F_j(t)|\leq C\varepsilon +C_1e^{-C_2\varepsilon^{-\frac{1}{s-1}}}\int_{0}^{t}F_j(\tau)^2d\tau \hspace{10pt} (j=1,2)
	\end{equation*}
	follow on the characteristics, and \eqref{Eq:T^* Gevrey1} holds similarly to Theorem 1.
	Furthermore, by the same argument as in Lemma 5,
	\begin{align*}
		\text{under the assumption (i),} \,\,  & c'(u_x) \geq C_1e^{-C_2\varepsilon^{-\frac{1}{s-1}}} \hspace{7pt}\text{on $x_{-}(t;0,x_0)$} , \\
		\text{under the assumption (ii),} \,\, & c'(u_x) \geq C_1e^{-C_2\varepsilon^{-\frac{1}{s-1}}} \hspace{7pt}\text{on $x_{+}(t;0,x_0)$}
	\end{align*}
	hold from the assumption \eqref{Eq:c assump Gevrey}.
	Therefore, under the assumption ($\mathrm{i}$), the integral inequality
	\begin{equation*}
		F_1(t)\geq C\varepsilon +C_1e^{-C_2\varepsilon^{-\frac{1}{s-1}}}\int_{0}^{t}F_1(\tau)^2d\tau
	\end{equation*}
	follows on the characteristics $x_{-}(t;0,x_0)$, and \eqref{Eq:T^* Gevrey2} holds. Similarly for the assumption ($\mathrm{ii}$).
\end{proof}

\begin{prop}
	We assume that $c(\theta)$ satisfies \eqref{Eq:c assump flat}. Then
	\begin{equation*}
		|c'(\theta)| \leq Ce^{-C'|\theta|^{-\frac{1}{s-1}}} \quad \text{for all } \theta \in U.
	\end{equation*}
\end{prop}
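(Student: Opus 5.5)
The idea is to use Taylor's formula with all derivatives vanishing at the origin, combined with the Gevrey bound on the remainder, and then optimize over the order of expansion.

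Since $c'(0)=\dots=c^{(m)}(0)=0$ for every $m$, the function $c'$ is flat at $0$. Fix $\theta\in U$; we may shrink $U$ to an interval containing $0$, so that the segment $[0,\theta]$ lies in $U$. For each $m\ge 1$, Taylor's theorem with Lagrange remainder applied to $c'$ at the origin gives
\begin{equation*}
	|c'(\theta)| \le \sup_{|\eta|\le|\theta|}\frac{|c^{(m+1)}(\eta)|}{m!}\,|\theta|^{m}
	\le \frac{CA^{m+1}((m+1)!)^s}{m!}\,|\theta|^m ,
\end{equation*}
using the definition of $G^s(U)$. Since $(m+1)!^s/m!\le (m+1)^s (m!)^{s-1}$ and $m!\le m^m$, this is bounded by
\begin{equation*}
	C' (m+1)^s \bigl(A m^{s-1}|\theta|\bigr)^m .
\end{equation*}
Absorbing the polynomial factor $(m+1)^s\le 2^{sm}$ into the geometric part gives
\begin{equation*}
	|c'(\theta)|\le C'\bigl(B m^{s-1}|\theta|\bigr)^m \quad\text{with } B=2^sA.
\end{equation*}

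Now choose $m$ depending on $\theta$ so that $Bm^{s-1}|\theta|\le e^{-1}$. Concretely, take
\begin{equation*}
	m=\Bigl\lfloor (eB|\theta|)^{-\frac{1}{s-1}}\Bigr\rfloor .
\end{equation*}
This is $\ge 1$ once $|\theta|$ is small, and $m\ge \tfrac12 (eB|\theta|)^{-1/(s-1)}$. Then
\begin{equation*}
	|c'(\theta)|\le C'e^{-m}\le C'\exp\Bigl(-\tfrac12 (eB)^{-\frac{1}{s-1}}|\theta|^{-\frac{1}{s-1}}\Bigr),
\end{equation*}
which is the claim for $|\theta|\le\theta_0$, with $C'$ in the statement equal to $\tfrac12(eB)^{-1/(s-1)}$.

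For $\theta\in U$ with $|\theta|>\theta_0$ (if $U$ is bounded, or after restricting $U$ to a bounded neighborhood), we use $|c'(\theta)|\le CA$ from the Gevrey bound with $m=1$. Since $e^{-C'|\theta|^{-1/(s-1)}}\ge e^{-C'\theta_0^{-1/(s-1)}}>0$ there, enlarging $C$ covers this range.

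The main obstacle is the optimization in $m$: getting the factorial bookkeeping right, namely $(m+1)!^s/m!\lesssim C^m (m!)^{s-1}\le C^m m^{(s-1)m}$, so that the exponent $1/(s-1)$ comes out exactly. A secondary point is that the proposition should be read for $U$ a bounded interval around the origin, which is harmless since only small $\theta=u_x$ matter, by \eqref{Eq:u_x}. Applying the proposition with $|u_x|\le C_1\varepsilon$ and the monotonicity of $\theta\mapsto e^{-C'\theta^{-1/(s-1)}}$ then yields \eqref{Eq:c'Gevrey}.
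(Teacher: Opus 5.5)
Your proof is correct and follows essentially the same route as the paper. Both arguments use the Taylor expansion of the flat function $c'$ with Lagrange remainder, the Gevrey bound $((m+1)!)^s/m! = (m+1)^s (m!)^{s-1} \le (m+1)^s m^{(s-1)m}$, and a choice $m \sim |\theta|^{-1/(s-1)}$ that makes the geometric base at most $e^{-1}$. The differences are only bookkeeping: you absorb $(m+1)^s \le 2^{sm}$ into the base before choosing $m$, whereas the paper builds a factor $2$ into the choice of $m$ and uses $(m+1)^s/2^m \le C$; you also explicitly handle $|\theta| > \theta_0$ and the requirement that $U$ be an interval, which the paper leaves implicit.
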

\begin{proof}
	This inequality is a classical characterization of Gevrey flatness. It appears, for example, as Proposition 5 in Ramis \cite{Ramis78} (see also \cite{Rammis84}).
	For the convenience of the reader, we include a proof here. Our argument is relatively direct and does not rely on Stirling’s formula. By Taylor expansion $c'(\theta)$ up to order $m$,
	\begin{align*}
		|c'(\theta)| & \leq CA^{m+1}\frac{((m+1)!)^s}{m!}|\theta|^m \\[3pt]
		             & = CA^{m+1}(m+1)^s(m!)^{s-1}|\theta|^m        \\
		             & \leq CA(m+1)^s|Am^{s-1}\theta|^m.
	\end{align*}
	We take the largest $m \in \mathbb{N}$ such that for $\theta$:
	\begin{equation*}
		m \leq \left(\frac{1}{2eA|\theta|}\right)^{\frac{1}{s-1}}.
	\end{equation*}
	Note that for this $m$, the following inequalities hold:
	\begin{equation*}
		Am^{s-1}|\theta| \leq \frac{1}{2e} \hspace{8pt} \text{and} \hspace{8pt} m > \left(\frac{1}{2eA|\theta|}\right)^{\frac{1}{s-1}}-1.
	\end{equation*}
	Therefore, noting that $\frac{(m+1)^s}{2^m} \leq C$,

	\begin{align*}
		|c'(\theta)| & \leq C_1A(m+1)^s \left(\frac{1}{2e}\right)^m \\
		             & \leq C_2Ae^{-m}                              \\
		             & \leq C_3Ae^{-C'|\theta|^{-\frac{1}{s-1}}}.
	\end{align*}

\end{proof}

\section*{Acknowledgements}
Y. Sugiyama was partially supported by Grants-in-Aid for Scientific Research (C) (No. 23K03169).
Also, the authors would like to thank Professor Hideaki Sunagawa for bringing the references \cite{Hoshiga,LZK} to their attention.

\vspace{30pt}

Yuusuke Sugiyama

Department of Mathematics,

Tokyo University of Science,

Kagurazaka 1-3, Shinjuku-ku,

Tokyo 162-8601, Japan

E-mail: sugiyama.y@rs.tus.ac.jp

\vspace{17pt}

Taro Yamanoi

Department of Mathematics,

Faculty of Science,

Tokyo University of Science,

Kagurazaka 1-3, Shinjuku-ku,

Tokyo 162-8601, Japan

E-mail: 1126709@ed.tus.ac.jp


\begin{thebibliography}{9}
	\bibitem{CPZ}
	G. Chen, R. Pan, S. Zhu, Singularity formation for the compressible Euler equations, SIAM J. Math. Anal. 49 (2017), no.4, 2591–2614.

	\bibitem{Courant}
	R. Courant, P. Lax, On nonlinear partial differential equations with two independent variables, Comm. Pure Appl. Math. 2 (1949), 255--273.

	\bibitem{Douglis}
	A. Douglis, Some existence theorems for hyperbolic systems of partial differential equations in two independent variables. Comm. Pure Appl. Math. 5 (1952), 119-154.

	\bibitem{Philip}
	P. Hartman and A. Wintner, On hyperbolic partial differential equations. Amer. J. Math. 74 (1952), 834--864.

	\bibitem{Haruyama}
	Y. Haruyama and H. Takamura, Blow-up of classical solutions of quasilinear wave equations in one space dimension, Nonlinear Anal. Real World Appl. 81 (2025), 104212.


	\bibitem{Hoshiga}
	A. Hoshiga, The lifespan of solutions to quasilinear hyperbolic systems in the critical case, Funkcial. Ekvac., 41 (1998), pp. 167-188.

	\bibitem{John}
	F. John, Formation of singularities in one-dimensional nonlinear wave propagation, Comm. Pure Appl. Math. 27 (1974), 377-405.

	\bibitem{Lax57}
	P. D. Lax, Hyperbolic systems of conservation laws. II. Comm. Pure Appl. Math. 10 (1957), 537-566.

	\bibitem{Lax64}
	P. D. Lax, Development of Singularities of solutions of nonlinear hyperbolic partial differential equations, J. Math. Phys. 5 (1964), 611--613.

	\bibitem{LeVeque}
	R. J. LeVeque, Finite Volume Methods for Hyperbolic Problems, Cambridge University Press,
	Cambridge, UK (2002).

	\bibitem{Manfrin}
	R. Manfrin, On the breakdown of solutions to $N \times N$ quasilinear hyperbolic systems, Nonlinear Anal. 52 (2003), no. 1, 143-172.

	\bibitem{Ramis78}
	J.-P. Ramis, Devissage Gevrey, Soc. Math. de France Asterisque 59-60 (1978), 173-204.

	\bibitem{Rammis84}
	J.-P. Ramis, Théorèmes d’indices Gevrey pour les équations différentielles ordinaires, Memoirs of the American Mathematical Society 48, no. 296 (1984).

	\bibitem{Sugiyama16}
	Y. Sugiyama, Degeneracy in finite time of 1D quasilinear wave equations, SIAM J. Math. Anal. 48 (2016), no.2, 847-860.

	\bibitem{LZK}
	Li, T. T., Zhou, Y., Kong, D. X., Global classical solutions for general quasilinear hyperbolic systems with decay initial data, Nonlinear Analysis, TMA, 28(1997), 1299-1332.

\end{thebibliography}
\end{document}